\definecolor{Red}{rgb}{1,0,0}
\newtheorem{theorem}{Theorem}
\newtheorem{defn}[theorem]{Definition}
\newtheorem{remark}[theorem]{Remark}
\newcommand{\sumn}{\sum_{i=1}^{n}}
\newcommand{\prodn}{\prod_{i=1}^{n}}
\newcommand{\sumT}{\sum_{t=1}^{T}}
\newcommand{\xt}{\mathbf{x}_t}
\newcommand{\xtvec}{(x^1_t, x_t^2, \ldots, x_t^n)}
\newcommand{\xto}{\mathbf{x}_{t+1}}
\newcommand{\xit}{x^i_t}
\newcommand{\xito}{x^i_{t+1}}
\newcommand{\xterm}{\mathbf{x}_{T+1}}
\newcommand{\xiterm}{x^i_{T+1}}
\newcommand{\xinit}{\mathbf{x}_1}
\newcommand{\at}{\mathbf{a}_t}
\newcommand{\atvec}{(a^1_t, a_t^2, \ldots, a_t^n)}
\newcommand{\ait}{a^i_t}
\newcommand{\zit}{z_t^i}
\newcommand{\zt}{\mathbf{z}_t}
\newcommand{\ut}{\mathbf{u}_t}
\newcommand{\uit}{u_t^i}
\newcommand{\utvec}{(u^1_t, u_t^2, \ldots, u_t^n)}
\newcommand{\ztvec}{(z^1_t, z_t^2, \ldots, z_t^n)}
\newcommand{\dt}{D_t}
\newcommand{\dkt}{d_t^k}
\newcommand{\dto}{D_{t+1}}
\newcommand{\ft}{\mathbf{f}_t(\xt,\at)}
\newcommand{\ftvec}{(f_t^1(x_t^1,a_t^1), \ldots, f_t^n(x_t^n,a_t^n))}
\newcommand{\gt}{g_t(\xt,\at)}
\newcommand{\git}{g_t^i(\xit,\ait)}
\newcommand{\gterm}{g_{T+1}(\mathbf{x}_{T+1})}
\newcommand{\giterm}{g_{T+1}^i(x^i_{T+1})}
\newcommand{\Bt}{B_t(\xt, \at, \dt)}
\newcommand{\bb}{b_t(\dt)}
\newcommand{\Bit}{B_t^i(\xit, \ait, \dt)}
\newcommand{\lambdat}{\lambda_t}
\newcommand{\lambdatvt}{\lambda_t(\vt)}
\newcommand{\bflbda}{\boldsymbol{\lambda}}
\newcommand{\Jt}{J_t(\xt)}
\newcommand{\Jto}{J_{t+1}(\ft)}
\newcommand{\Jht}{J_t(\hist)}
\newcommand{\Jhto}{J_{t+1}(\histo)}
\newcommand{\Lte}{L_t(\xt,\vt ; \bflbda)}
\newcommand{\liute}{L_t^i(\xit,\vt ; \bflbda)}
\newcommand{\Lt}{L_t(\xt,\vt ; \bflbda)}
\newcommand{\Lto}{L_{t+1}(\xto,\vto ; \bflbda)}
\newcommand{\liuto}{L_{t+1}^i(\xito, \vto; \bflbda )}
\newcommand{\Lo}{\mathcal{L}_1}
\newcommand{\hist}{\mathbf{h}_t}
\newcommand{\histo}{\mathbf{h}_{t+1}}
\newcommand{\hterm}{\mathbf{h}_{T}}
\newcommand{\aht}{\mathbf{u}_{[t-1]}}
\newcommand{\ahtvec}{(\mathbf{a}_{0}, \mathbf{a}_1, \ldots, \mathbf{a}_{t-1})}
\newcommand{\liut}{L_t^i(\xit, \wht ; \Lambda)}
\newcommand{\E}{\mathbb{E}}
\newcommand{\Prob}{P}
\newcommand{\vt}{v_{t}}
\newcommand{\vto}{v_{t+1}}
\newcommand{\fvt}{\tilde{f}_t(\vt,\Dt)}
\newcommand{\bas}{\phi_t^r (\vto)}
\newcommand{\etw}{\eta_t^r}
\newcommand{\Dt}{D_{t+1}}
\newcommand{\yt}{\mathbf{y}_t}
\newcommand{\yit}{y^i_t}
\newcommand{\yito}{y^i_{t+1}}
\newcommand{\ytvec}{(y^1_t, y_t^2, \ldots, y_t^n)}
\newcommand{\qt}{\mathbf{q}_{t}}
\newcommand{\qit}{q^i_{t}}
\newcommand{\qito}{q^i_{t+1}}
\newcommand{\qtvec}{(q^1_{t}, q_{t}^2, \ldots, q_{t}^n)}
\newcommand{\bmin}{b^i_{\min}}
\newcommand{\bmax}{b^i_{\max}}
\newcommand{\alphait}{\alpha^i_t}
\newcommand{\betait}{\beta^i_t}
\newcommand{\ru}{r_u^i}
\newcommand{\rd}{r_d^i}
\newcommand{\liu}{\overline{l}_i}
\newcommand{\lil}{\underline{l}_i}
\newcommand{\ci}{\bar{c}_i}
\newcommand{\hi}{h_i}
\newcommand{\Hfunc}{H^i_t}
\newcommand{\Ffunc}{F^i_t}
\newcommand{\setN}{\mathcal{N}}
\newcommand{\setXt}{\mathcal{X}_t}
\newcommand{\setXit}{\mathcal{X}^i_t}
\newcommand{\setAt}{\mathcal{A}_t(\xt)}
\newcommand{\setAit}{\mathcal{A}^i_t(\xit)}
\newcommand{\setAtc}{\overline{\mathcal{A}}_t(\xt)}
\newcommand{\Aitset}{A_t^i (\yit,\ait)}
\newcommand{\Bitset}{B_t^i (\yit,\ait)}
\newcommand{\Citset}{C_t^i (\yit,\ait)}
\algnewcommand\algorithmicparfor{\textbf{parfor}}
\algnewcommand\algorithmicpardo{\textbf{do}}
\algnewcommand\algorithmicendparfor{\textbf{end\ parfor}}
\begin{document}

\begin{center}
\LARGE
A Dual Approximate Dynamic Programming Approach to Multi-stage Stochastic Unit Commitment
\end{center}

\begin{center}
 %\normalsize
Jagdish Ramakrishnan\footnote{Walmart Labs, San Bruno, CA, jramakrishnan@walmartlabs.com. This work was completed while
the author was with the Wisconsin Institute for Discovery, University of Wisconsin-Madison.} and James
Luedtke\footnote{Department of Industrial and Systems Engineering, University of Wisconsin-Madison, jim.luedtke@wisc.edu.}
\end{center}

\begin{center}
%\normalsize
\today
\end{center}
%\vspace{2pt}
%\maketitle

\vspace{-10pt}

\begin{abstract}
We study the multi-stage stochastic unit commitment problem in which commitment and generation decisions can be made and
adjusted in each time period. We formulate this problem as a Markov decision process, which is ``weakly-coupled" in the
sense that if the demand constraint is relaxed, the problem decomposes into a separate, low-dimensional, Markov decision
process for each generator. We demonstrate how the dual approximate dynamic programming method of Barty, Carpentier, and
Girardeau ({\it RAIRO Operations Research}, 44:167-183, 2010) can be adapted to obtain bounds and a policy for
this problem. Previous approaches have let the Lagrange multipliers depend only on time; this can result in weak lower
bounds. Other approaches have let the multipliers depend on the entire history of past random
observations; although this provides a strong lower bound, its ability to handle a large number of sample paths or
scenarios is limited. We demonstrate how to bridge these approaches for the stochastic unit commitment problem by
letting the multipliers depend on the current observed demand. This allows a good tradeoff between strong lower bounds and good
scalability with the number of scenarios. We illustrate this approach numerically on a 168-stage stochastic unit
commitment problem, including minimum uptime, downtime, and ramping constraints.
\end{abstract}

\section{Introduction}

The unit commitment problem is an important problem in operation of power systems and has been studied extensively. Due
to the presence of both integer and continuous variables, it remains a challenging problem to solve. The basic
problem is to determine the on/off status and generation amounts of a collection of interconnected generators so that
demands are met while minimizing the total generation cost. 
%Models differ in their level of detail. For example, DC
%models only account for the cumulative demand effects whereas AC models take into account the inherent nature of the
%power flow. 
An important feature of the unit commitment problem is the generator constraints, which include constraints on the
minimum and maximum generation amount, minimum and maximum number of consecutive periods the generator can be on or off
(so called ``min up/down constraints''), and bounds on the change in generation level from one period to the next
(ramping constraints). In this paper, we consider a version of the problem in which there is a single aggregate amount
of demand to be met in each time period. More complicated models also consider the transmission network and its
associated constraints, to ensure the generated electricity can be feasibly distributed to the demand locations in the
grid.

In the deterministic unit commitment problem, the future demands are  modeled as known quantities. A significant
amount of literature has focused on this problem, see e.g., \cite{FGL2009, GGZ2005, HPZ2017, WWS2013, Zhu2015}.
Stochastic formulations model the future demands as a sequence of random variables. A sequence of possible demands over time
is known as a demand scenario. As the number of demand scenarios grow, the optimization model becomes very challenging. While
our discussion is limited to handling demand uncertainties, there are a number of other uncertainties that can be
modeled in the unit commitment problem. For example, there are models that take into account generator failures \cite{WaS2006}, weather variations
\cite{UBB2016}, price spikes in the spot market \cite{HoW2011}, and availability of renewable energy \cite{ASK2017}.
There is a vast amount of literature on the stochastic unit commitment problem, see e.g., \cite{STB2004, MCP2009,
POO2011, ZWL2015, TAF2015}. A popular approach is to use a two-stage stochastic programming model \cite{Caroe1998},
where the first stage typically consists of generator on/off decisions, while the second stage consists of power
dispatch decisions (and perhaps also, on/off decision for quick-start generators) \cite{Caroe1998, Zheng2012}. These
models are appropriate when commitment decisions must be fixed for the entire planning horizon.

Multi-stage models can accurately model a longer time horizon and dependencies between time periods; this modeling
approach can be useful when generator commitment decisions may be adjusted frequently. However, with the increased
complexity, large instances of the problem (e.g., having many generators or many time periods) are very challenging to solve. This limited
scalability is due to the exponential increase in the demand scenarios with the number of stages. Note that we can view
the two-stage model to be a restriction on the multi-stage model where the generator on/off decisions are restricted to
be decided in
advance.

We begin with a Markov Decision Process formulation \cite{Put2005} of the multi-stage stochastic unit commitment problem. Direct solution of
this model is impractical for even modest-size instances, since the size of the state-space grows exponentially with the
number of generators in the system.
%the ``curse of dimensionality" or the exponential increase in computation from an increasing number of scenarios, these
%MDPs become very difficult of solve in general. However, exploiting special problem structure can allow more
%computationally tractable solutions. Types of problem structure include Factored MDPs \cite{GKP2001} and weakly coupled
%models \cite{AdM2008}.
We therefore investigate an approximation approach that can yield a policy, along with a bound on how far it is from the
optimal policy. In particular, we apply the Dual Approximate Dynamic Programming (DADP) approach proposed in \cite{BCG2010},
which leads to an approach that decomposes the problem into a separate MDP problem for each generator in the system by relaxing the
constraints that demands must be met in each time period. The key to this approach is to allow the Lagrangian
multipliers to depend on a ``summary'' of the history of observed demands up to that time period, allowing a trade-off
(by choosing the summary) between the complexity of
solving the relaxed problem and the quality of the lower bound achieved.  This approach is related to the relaxation approach in
\cite{Haw2003, AdM2008}, but in their work the Lagrange multipliers only depend on the time period, which can result in weak lower bounds. 
On the other hand, in \cite{TaB2000} the Lagrange multipliers depend on the time period and the 
scenario of demand outcomes up to that time period. This approach can yield strong bounds, but is not practical  for
instances with many stages because the number of sample paths to a time period may grow exponentially with the number of stages. 
%In \cite{ABW2015}, a Lagrangian approach was used to solve MDPs efficiently in a restricted policy class.
The DADP approach
has been applied on a small energy problem with hydraulic plants and thermal units for illustrative purposes in
\cite{BCC2010}. However, it does not capture many of the complexities in the stochastic unit commitment problem, such as min up/down and ramping constraints. 
We present a  numerical illustration on a large-scale 168-stage stochastic unit commitment problem. For bound comparisons, we generate a feasible policy and
obtain upper bounds by using the value function from the DADP approach as an approximate future value function for a
one-step lookahead policy. We show that this approach provides good lower and upper bounds for the stochastic unit
commitment problem and provides good scalability with the number of generators. 

The remainder of this paper is organized as follows. The problem formulation is given in section \ref{sec:form}. The
application of DADP to this problem is derived in section \ref{sec:dadp}, and our numerical illustration is presented in
section \ref{sec:num}.

\section{Formulation of the Stochastic Unit Commitment Problem}
\label{sec:form}

%In this section, we formulate the stochastic unit commitment problem as a stochastic dynamic program. Our model considers production at an aggregate level, i.e., we do not consider transmission in the network. Here, we consider a single load with uncertain demand satisfied by a given set of generators. We consider a multi-stage formulation in which we plan on an hourly basis for a week in advance, and allow decisions to change every hour. In this section, we formulate this stochastic unit commitment problem as a weakly coupled dynamic program \cite{AdM2008}. In such a weakly coupled framework, we have a collection of Markov decision problems (MDPs) that are independent except for linking or budget constraints. In our problem, we have a subproblem for each generator, and they are linked together with a constraint ensuring that the total production meets the load demand.

We assume there are $n$ generators and $T$ time periods indexed by $t = 1, \ldots, T$. We consider a model that
ensures that total power generation is sufficient to meet total demand in each period, but does not consider the transmission network. 
We define the random parameter $\dt$ as the electric load or demand at time $t$, which is an element of the space
$\mathcal{D}_{t} = \{\delta_t(r), r = 1, 2, \ldots, R\}$ where $\delta_t(r)$ is the $r$th possible demand realization in time period $t$. 
Demand is modeled as a Markovian process, i.e., the distribution of the random demand $\Dt$ depends on $\dt$. We define the Markovian demand distribution $\Prob_t(w \vert d)$ as the probability that $\Dt = w$, given $\dt = d$, for $w \in \mathcal{D}_{t+1}$  and $d \in \mathcal{D}_{t}$, for $t = 1, \ldots, T$. In our formulation, we assume that there is a single load to be satisfied, so the demand takes on a scalar value.   

At time $t$,
the state of the system is given by the vector $\xt = (\yt, \qt, \dt)$, where $\yt = \ytvec$ is a vector of generator
statuses, $\qt = \qtvec$ is a vector of generator production levels in the previous period, and $\dt$ is the current aggregate 
demand. The current demand $\dt$ is assumed to be observed at the end of the previous stage, and so
is included as part of the state vector, so that decisions in stage $t$ may depend on the observed value of $\dt$. 
The vector $\qt$ of previous production levels is used to enforce ramp up and down constraints for each
generator. The minimum and maximum generation levels from each generator $i=1,\ldots,n$ are denoted $b^i_{\min}$ and $b^i_{\max}$,
respectively, and hence $q_t^i \in [b^i_{\min},b^i_{\max}]$. The vector
$\yt$ keeps track of how long each generator has been on or off and is needed to enforce minimum up and down time
constraints. We also view the state as a vector of three tuples of the form $\xt = \xtvec$, where $\xit = (\yit, \qit,
\dt)$. 
Note that a generator can also represent external trading on the spot market, whether it is buying or selling
electric load for a price. In this case, the cost of producing power in such a unit would then represent the cost of
buying (positive cost) or the profit from selling (negative cost).

We denote the minimum up and down time for generator $i$ to be $\liu$ and $\lil$, respectively. Let $\yit = (\alphait, \betait)$, where $\alphait \in [0, \ldots,\liu]$ represents the number of periods the generator has been on, and $\betait \in [0, \ldots,\lil]$ represents the number of periods the generator has been off. Either $\alphait$ or $\betait$ must be zero at any point in time, but they cannot be zero simultaneously. If the generator has been on for more than $\liu$ time periods, then $(\alphait, \betait) = (\liu, 0)$, meaning the generator can be turned off. Similarly, if the generator has been off for more than $\lil$ time periods, then $(\alphait, \betait) = (0, \lil)$, meaning the generator can be turned on. If the generator is on and must remain on for some more time, $\alphait$ will be a positive integer but strictly less than $\liu$ and $\betait$ will be zero, and vice versa if the generator is off. For initialization purposes, we could set $y_1^i = (0, \lil)$, which would mean the generator has remained off for long enough that it can be turned on.

In summary, the state space in period $t$ for each generator $i=1,\ldots,n$ is defined as
\begin{equation*}
\setXit = \{ (\yit, \qit, d) : \yit = (\alphait, \betait),  \alphait \in [0, \ldots,\liu], \betait \in [0, \ldots,\lil],
\qit \in [b^i_{\min},b^i_{\max}], d \in \mathcal{D}_{t} \},
\end{equation*}
for $t = 2, \ldots, T$, and $\mathcal{X}_1^i = \{ (y_1^i, q_1^i, d) : y_1^i = (0, \lil), q_1^i = 0, d = 0 \}$.
The overall state $\xt$ is a member of the state space $\setXt$, which is the
Cartesian product of the individual state spaces $\setXit$, i.e., $\xt \in \setXt = \times_{i=1}^n \setXit$. 

The actions at time $t$ are denoted by the vector $\at = (\zt, \ut)$, where $\zt = \ztvec$ is a vector of generator
production levels, and $\ut = \utvec$ is a vector of binary generator on/off decisions for the next stage. Here, $\uit = 0$ means generator $i$ is off, and $\uit = 1$
means the generator is on. We assume that the on/off decisions are made for the next period, whereas the generation
decisions are made for the current period, after observing the current demand $D_t$. Thus, at time $t$, the generation decisions $\zt$ are for the current period $t$, and the commitment decisions $\ut$ are for period $t + 1$.

For each generator $i$, we enforce the following: minimum and maximum production level bounds, minimum up and down time
constraints, and ramp up and down production level constraints.  The minimum and maximum production levels
 for generator $i$ are enforced with the following constraint:
\begin{equation}
\label{eq:min_max}
\underline{u}(\yit) b^i_{\min} \leq \zit \leq \underline{u}(\yit) b^i_{\max} ,
\end{equation}
where $\underline{u}(\yit)$ is the applied commitment decision $u^i_{t-1}$ which equals $1$ when $\yit = (j, 0)$ for $j
= 1, \ldots, \liu$ and $0$ otherwise. In the above constraint, if the previous commitment decision was
$\underline{u}(\yit) = 0$, the production level is set to $0$; otherwise, the production level remains between its
minimum and maximum levels. We enforce minimum up and down constraints by requiring:
\begin{equation}
\label{eq:min_up_down}
\underline{I}(\yit) \leq \uit \leq \overline{I}(\yit),
\end{equation}
where $\underline{I}(\yit)$ equals 1 if $\yit = (j, 0)$ for $j = 1, \ldots, \liu - 1$ and 0 otherwise, and $\underline{I}(\yit)$ equals 0 if $\yit = (0, j)$ for $j = 1, \ldots, \lil - 1$ and 1 otherwise. For ramp up and down constraints, we enforce:
\begin{equation}
\label{eq:ramp_up_down}
\qit - \rd - (1 - \underline{u}(\yit)) b^i_{\min} \leq \zit \leq \qit + \ru + \overline{w}(\yit) b^i_{\min},
\end{equation}
where $\underline{u}(\yit)$ is the applied commitment decision $u^i_{t-1}$ as before, $\overline{w}(\yit)$ is a
``turn on'' indicator that is $1$ if $\yit = (1, 0)$ and 0 otherwise, and $\ru$ and $\rd$ are ramp up and down amounts for generator $i$, respectively. 

At each time period, we enforce a linking constraint that ensures the sum of the production levels from each generator satisfies the demands observed: 
\begin{equation}
\label{eq:link}
\sumn z_t^i = \dt.
\end{equation}
Since we assume $D_1 = 0$ and all the generators are initially off in the first stage, the above constraint would mean $z_1^i = 0$ for all $i$. This model assumes total generation should \emph{exactly} meet the load. An alternate constraint could ensure total
generation to \emph{at least} meet the load; we discuss minor changes in the solution approach if this were modeled in a
later section. This model may be extended to allow $z_t^i$, and $\dt$ to be vectors, e.g., if we have multiple electric loads, although
we focus on the scalar case. 

In summary, the control space for each generator $i=1,\ldots,n$ is defined as
\begin{equation*}
\setAit = \{ (\zit, \uit) : (\ref{eq:min_max}), (\ref{eq:min_up_down}), (\ref{eq:ramp_up_down}), \uit \in \{0, 1\} \}.
\end{equation*}
Then, the overall feasible action space in stage $t$ is defined as 
\begin{equation*}
\setAtc = \{ \at = \atvec \in  \times_{i=1}^n \setAit : \sumn z_t^i = \dt \}.
\end{equation*}
To initialize the model, in the first period, $t=1$, we assume we only make commitment decisions.  Thus, we assume $D_1 = 0$ and hence $z_1^i = 0$ for all $i$. 

We now define the state update equations. For the state $\yit$, we have:
\begin{equation*}
\yito =
\begin{cases}
    (\alphait + 1, 0), &\text{if $0 < \alphait < \liu$}\\
    (0, \betait + 1), &\text{if $0 < \betait < \lil$}\\
    (\liu, 0), &\text{if $\alphait = \liu$, $\uit = 1$}\\
    (0, 1), &\text{if $\alphait = \liu$, $\uit = 0$}\\
    (0, \lil), &\text{if $\betait = \lil$, $\uit = 0$}\\
    (1, 0), &\text{if $\betait = \lil$, $\uit = 1$.}
\end{cases}
\end{equation*}
The state update equations for $\qit$ representing previous production levels is $\qito = z_t^i$. The overall update equation $\xto = \ft = \ftvec$ represents all of the above update equations taken together.

At time $t$ for generator $i$, the cost $\git$ is the total expected generation cost. We define $\ci$ to be the no load
cost (fixed cost for generator being on), $\hi$ to be a fixed cost for turning on generator $i$ when it is off, and $\Ffunc (z)$ to be the generation cost of producing $z$. We model $\Ffunc (z)$ as a piecewise linear function of $z$. For generator $i$, we evaluate price-quantity bids over a grid and denote these points $(b^i_k, c^i_k)$, for $k = 0, \ldots, K_i$, where $b^i_k$ is the $k$th generation level and $c^i_k$ is the cost associated with it. These are the breakpoints of the piecewise linear generation cost function. Note that based on the previous notation, we have $\bmin = b^i_0$ and $\bmax = b^i_{K_i}$, for $i = 1, \ldots, n$. The cost for time period $t$ is incurred after implementing the controls $\at$. The startup cost $\Hfunc (\yit,\uit)$ is defined as
\begin{equation*}
\Hfunc (\yit,\uit) =
\begin{cases}
	\hi,	&\text{if $\yit = (0, \lil), \uit = 1$}\\
        0,	    &\text{otherwise.}
\end{cases}
\end{equation*}
Now we define the total cost incurred per time period as the sum of the start up cost, no load cost, and the generation cost:
\begin{equation*}
\git = \Hfunc (\yit, \uit) + \ci \uit + \Ffunc (z_t^i),
\end{equation*}
for $t = 1, \ldots, T-1$. For the last period $T$, the on/off decisions are irrelevant since commitment decisions are determined for the next stage; thus, only the cost associated with production level decisions are incurred:
\begin{equation*}
g_T^i (x_T^i, a_T^i) = F_T^i (z_T^i).
\end{equation*}

The overall cost incurred at time $t$ is the sum of the individual costs, i.e., $\gt = \sumn \git$. Thus, we formulate the stochastic unit commitment problem as
\begin{equation*}
\underset{\pi}{\min} \; \E \left[ \sumT \gt \right],
\end{equation*}
where $\pi = \{ (\bm{\zeta}_0, \bm{\mu}_0), \ldots, (\bm{\zeta}_{T-1}, \bm{\mu}_{T-1}) \}$ represents an admissible policy, where $(\bm{\zeta}_t, \bm{\mu}_t)$ maps the state $\xt$ into actions $(\zt, \ut) = (\bm{\zeta}_t(\xt), \bm{\mu}_t(\xt))$ such that $(\bm{\zeta}_t(\xt), \bm{\mu}_t(\xt)) \in \setAtc$ for all $\xt \in \setXt$. Note that we have not included a terminal cost associated with being in a potential undesirable state after applying the sequence of decisions; this would be a straightforward addition to the cost, e.g., $\E \left[ \gterm \right]$. 
If we define $\Jt$ to be the minimum expected cost-to-go when the system is in state $\xt \in \setXt$, then $\Jt$
satisfies the dynamic programming (DP) recursion 
\begin{equation}
\label{eq:DP}
\Jt = \underset{\at \in \setAtc}{\min} \bigg\{  \E \left[ \gt +  \Jto \vert \xt, \at \right] \bigg\},
\end{equation}
for $t = 1, \ldots, T$, where $J_{T+1}(\xterm) = 0$, and the expectation is taken with respect to the probability
distribution $\Prob_t(\Dt \vert \dt)$.

The notation described in this section is summarized in Table \ref{table:notation}.

\begin{table}[ht]
\caption{Notation} % title of Table
\centering % used for centering table
\begin{tabular}{c l} % centered columns (4 columns)
\hline\hline %inserts double horizontal lines
Constant & Description \\ [0.5ex] % inserts table
%heading
\hline % inserts single horizontal line
$\yit$ & state variable indicating status of $i$th generator at time $t$ \\
$\qit$ & state variable indicating generator $i$'s previous production level at time $t$ \\
$\dt$ & observed demand at time $t$ \\
$\zit$ & production level decision of $i$th generator at time $t$ \\
$\uit$ & binary on/off decision of $i$th generator at time $t$ \\
$\ru$ & maximum ramp up amount for generator $i$ \\
$\rd$ & maximum ramp down amount for generator $i$ \\
$\liu$ & minimum up time for generator $i$ (minimum time generator must stay on after being turned on) \\
$\lil$ & minimum down time for generator $i$ (minimum time generator must stay off after being turned off) \\
$\ci$ & no load cost for generator $i$ (fixed cost for generator being on) \\
$\hi$ & turn on cost for generator $i$ (additional cost for turning on generator when it is off) \\ [1ex] % [1ex] adds vertical space
\hline %inserts single line
\end{tabular}
\label{table:notation} % is used to refer this table in the text
\end{table}

%In previous definitions of the weakly coupled model \cite{Haw2003,AdM2008}, the random demand was allowed to have more general a distribution that depends on the state and control. However, they also had a more restrictive assumption that the demand be independent between subproblems. That is, $\mathbf{d}_t = (d_t^1,d_t^2,\ldots,d_t^n)$ where
%\begin{equation*}
%\Prob_t(\mathbf{d}_t \vert \xt, \at) = \prodn \Prob_t(\dit \vert \xit, \ait).
%\end{equation*}
%In our model, we assume that the random parameter depends only on the previous demand (not on the full state and control) and is shared (not independent) between the subproblems.

\section{Dual Approximate Dynamic Programming Approach}
\label{sec:dadp}

We now describe how we adapt the dual approximate dynamic programming approach \cite{BCG2010, Gir2010, Lec2014} to obtain a policy and optimality bound for the stochastic unit commitment problem. While the DADP approach has been applied previously to a hydraulic valley example and simple small-scale power management problem, the problem did not have any integer variables and did not capture the complexities including min / up down and ramping constraints. We show for the first time its effectiveness on a large-scale stochastic unit commitment problem.

In time period $t$, an exact approach using the original DP recursion (\ref{eq:DP}) would result in a total number of
states of $| \setXt | = \prod_{i=1}^n | \setXit |$. Even with a relatively small number of states for each subproblem,
this solution approach would quickly become computational intractable because of the number of states growing
exponentially with the number of generators. In the DADP approach, a Lagrangian relaxation approach is used and the
resulting subproblems are solved independently. This approach instead solves a problem with $| \setXit |$ for
each generator $i=1,\ldots, n$. Having solved the relaxed problem, the approach provides a lower bound on the original optimal
objective. We can obtain a primal policy (and hence an upper bound) by using a one step lookahead policy by using an
approximate value function derived from the Lagrangian relaxation solution.

The main idea of the DADP approach is to introduce an additional state:
\begin{equation}
\label{eq:vt}
\vt = \tilde{f}_t(v_{t-1},\dt),
\end{equation}
which summarizes the exogenous information process $D_1,D_2, \ldots, D_{t}$, for $t = 2, \ldots, T$, with an initial
state $v_1 = D_1 = 0$. 
For each $t=2,\ldots,T$, we assume $\vt$ lies within a finite set of values, denoted by the set $\mathcal{V}_t$.
For the DADP approach to be computationally practical, the size of $\mathcal{V}_t$
must not be too large.  
We let the Lagrange multipliers depend on $\vt$, and hence define $\lambda_t:\mathcal{V}_t \rightarrow \mathbb{R}$, for $t = 1, \ldots, T$.
Here, if there are multiple linking constraints, $\lambda_t$ would be vector-valued, with each element
representing multipliers for each linking constraint. We assume that knowing $\vt$ is sufficient to know the distribution
of $D_r$ for any $r > t$. This is trivially satisfied if the random demands are stage-wise independent. More generally,
if knowing $\vt$ implies we know $\dt$ (e.g., $\vt$ may be a vector containing $\dt$ as one component), then this is
implied by the Markovian assumption. 

We let $\bflbda = [\lambda_t]_{t=1}^T$, be the collection of all Lagrangian multipliers. For a fixed $\bflbda$, the Lagrangian problem is:
\begin{equation}
\label{eq:L_lambda}
\underset{\pi}{\min} \; \E \left\{ \sumT \left[ \gt + \lambdatvt^\top \left(\sumn z_t^i - \dt\right) \right] \right\},
\end{equation}
where $\pi$ represents the class of admissible policies over the feasible control space $\setAt$. Now, the Lagrangian recursion is:
\begin{equation*}
L_{T+1}(\xterm, v_{T+1}; \bflbda ) = 0,
\end{equation*}
\begin{equation}
\label{eq:Leta_recursion}
\Lte = \underset{\at \in \setAt}{\min} \E \left[ \gt + \lambdatvt^\top \left(\sumn z_t^i - \dt\right) +  \Lto \; \vert \; \xt, \vt, \at \right]
%\Lt = \underset{\ait \in \setAit, \forall i \in \setN}{\min} \E \left[ \gt + \lbdw^T \left(\Bt - \bb \right) +  \Lto \right],
\end{equation}
for $t = 1, \ldots, T$.

\begin{remark}
The representation (\ref{eq:vt}) is very general; note that by letting $\vt = [v_{t-1}, \dt]$, we have a multiplier for every sequence in the exogenous demand process.
\end{remark}

\begin{remark}
Since (\ref{eq:link}) is an equality constraint, we let $\lambda_t$ to be a free variable. However, we could have allowed inequality linking constraints as well, in which case we would have ensured $\lambda_t \geq 0$.
\end{remark}

\subsection{Decomposition and Structural Properties}

We present the key results that are needed for applying the DADP approach to this problem. See \cite{Gir2010, Lec2014} for further results.

The following result shows that under this representation, the Lagrangian problem decomposes into $n$ individual
subproblems. In the theorem, the notation  $\E [ \lambda_r(v_r)D_r \; \vert \; v_t ]$  for $1 \leq t \leq r \leq T$ represents the expected value
of $\lambda_r(v_r)D_r$ given that the state of the demand process in stage $t$ is $v_t$, where the expectation is taken with
respect to the random outcomes $D_{t+1},\ldots,D_r$. Note that for $r=t$, this term is simply $\lambda_t(v_t)D_t$.

\begin{theorem}
\label{thm:Lg}
The Lagrangian recursion decouples as follows:
\begin{equation*}
\Lte = \sumn \liute - \sum_{r=t}^T  \E [ \lambda_r(v_r)D_r \; \vert \;  v_t ],
\end{equation*}
for $t=1,\ldots,T$, where 
\begin{equation*}
L_{T+1}^i(\xiterm,v_{T+1}; \bflbda) = 0,
\end{equation*}
and for $t=T,\ldots,1$
\begin{equation}
\liute = \underset{\ait \in \setAit}{\min} \E \left[ \git + \lambdatvt  z_t^i  + \liuto \; \vert \; \xit, \vt, \ait \right].
%\underset{\ait \in \setAit}{\min} \E \left[ \git + \lbdw^T \Bit +  \liuto \right],
\label{eq:Lite}
\end{equation}
\end{theorem}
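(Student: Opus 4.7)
The plan is to prove the identity by backward induction on $t$, starting at $t=T+1$ and working back to $t=1$. At $t=T+1$ the claim is immediate: the left-hand side $L_{T+1}(\xterm, v_{T+1}; \bflbda)$ is zero by the terminal condition of (\ref{eq:Leta_recursion}), each $L_{T+1}^i(\xiterm, v_{T+1}; \bflbda)$ is zero by definition, and the sum $\sum_{r=T+1}^{T}$ is empty.

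For the inductive step, I would substitute the hypothesized identity at time $t+1$ into the expectation appearing in (\ref{eq:Leta_recursion}). The instantaneous cost decomposes as $\gt = \sum_i \git$, the linking term $\sum_i z_t^i$ is already additive, and the constant $-\lambda_t(\vt)\dt$ is deterministic given $(\xt, \vt)$ and can be pulled outside the minimum. After this rearrangement the bracketed quantity inside the expectation becomes a sum of per-generator terms $\git + \lambda_t(\vt)\, z_t^i + \liuto$, minus the deterministic piece $\lambda_t(\vt)\dt$, minus the inherited inductive tail $\sum_{r=t+1}^T \E[\lambda_r(v_r) D_r \mid v_{t+1}]$.

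Two structural observations then yield the decomposition. First, because the coupling equality (\ref{eq:link}) has been dualized and no longer appears in (\ref{eq:Lte}), the feasible set factorizes as $\setAt = \times_{i=1}^n \setAit$, so the minimization over $\at$ separates into $n$ independent minimizations over $\ait$. Second, the update of $\xito$ depends only on $(\xit, \ait)$ and the next demand $D_{t+1}$ (the latter entering through the demand component of $\xito$), while $\vto = \tilde f_{t+1}(\vt, D_{t+1})$ depends only on $(\vt, D_{t+1})$. Under the stated assumption that $\vt$ carries enough information to determine the conditional distribution of $D_{t+1}$, the conditional expectation of each per-generator summand depends only on $(\xit, \vt, \ait)$, which after minimizing is exactly the definition (\ref{eq:Lite}) of $\liute$.

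The last piece is the inherited future-duals term. Applying the tower property, for each $r \geq t+1$ one has $\E\bigl[\E[\lambda_r(v_r) D_r \mid v_{t+1}] \,\bigm|\, \xt, \vt, \at\bigr] = \E[\lambda_r(v_r) D_r \mid \vt]$, since conditional on $\vt$ the evolution of the $v$-process is independent of $(\xt, \at)$. Combining this with the already-isolated constant $-\lambda_t(\vt)\dt$ and identifying it, following the convention of the theorem statement, as the $r=t$ term of the sum yields the claimed decoupled form. The main obstacle is this final bookkeeping: one must verify carefully that the tower step is legitimately invoked (the summary state $\vt$ must indeed screen off the future $D_r$ from $(\xt,\at)$, which is exactly what the Markovian representation $\vt = \tilde f_t(v_{t-1}, \dt)$ together with the stated distributional assumption provides) and that the $r=t$ index is folded into the sum without double counting.
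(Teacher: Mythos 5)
Your proposal is correct and follows essentially the same route as the paper: backward induction from the terminal condition, splitting $\gt = \sumn \git$ and the dualized linking term, separating the minimization over the product action set $\times_{i=1}^n \setAit$, and using the tower property (justified by the assumption that $\vt$ determines the conditional law of future demands) to collapse $\E[\lambda_r(v_r)D_r \mid v_{t+1}]$ into the $\vt$-conditioned tail and absorb the $-\lambda_t(\vt)\dt$ term as the $r=t$ summand. Your write-up merely spells out the tower-property and measurability bookkeeping that the paper leaves implicit.
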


\begin{proof}
We proceed by induction. For the base case, we have by definition
\begin{equation*}
L_{T+1}(\xterm, v_{T+1} ; \bflbda) = 0 = \sumn L_{T+1}^i(\xiterm,v_{T+1} ; \bflbda).
\end{equation*}
Now, assume the statement in the theorem holds for time $t+1$. Then, we have
\begin{align*}
\Lte &= \underset{\at \in \setAt}{\min} \E \Bigg[ \gt + \lambdatvt  \left(\sumn z_t^i - \dt\right) +  \Lto \; \Bigg\vert \; \xt, \vt, \at \Bigg] \\
&= \underset{\at \in \setAt}{\min}\E \Bigg[  \sumn \git + \lambdatvt  \left(\sumn z_t^i - \dt\right) +   \sumn \liuto  \\
& \qquad\qquad\qquad - \sum_{r=t+1}^T \E [ \lambda_r(v_r)D_r \; \vert \; v_{t+1} ] \; \Bigg\vert \; \xt, \vt, \at \Bigg] \\
&= \underset{\at \in \setAt}{\min} \sumn \E \left[ \git + \lambdatvt  z_t^i +  \liuto \; \vert \; \xit, \vt, \ait \right] \\ 
& \qquad\qquad - \sum_{r=t}^T \E [ \lambda_r(v_r)D_r \; \vert \; \vt ] \\
&= \sumn \liute - \sum_{r=t}^T \E [ \lambda_r(v_r)D_r \; \vert \;  v_{t} ],
\end{align*}
where $\liute$ satisfies (\ref{eq:Lite}), as desired.
\end{proof}

In particular, Theorem \ref{thm:Lg} implies that
\begin{equation}
\label{eq:Lg_decompose}
L_1(\xinit,v_1 ; \bflbda) = \sumn L_1^i(x_1^i,v_1 ; \bflbda) - \sum_{t=1}^T \E [ \lambda_t(v_t)D_t \; \vert \;
v_1 ]. 
\end{equation}
The importance of Theorem \ref{thm:Lg} is that, for fixed $\bflbda$, $L_1(\xinit,v_1 ; \bflbda)$ can be evaluated by solving $n$ independent
Markov decision problems, each with a relatively small state space. The term
$\sum_{t=1}^T \E [ \lambda_t(v_t)D_t \; \vert \;  v_1 ]$ is independent of the decision process, and so can be estimated
via simulation. 

We next present two important structural properties of $\mathcal{L}$.

\begin{theorem}
We have that
\begin{enumerate}
\item $L_1(\xinit,v_1 ; \bflbda) \leq J_1(\xinit)$ for all $\bflbda$.
\item $L_1(\xinit,v_1 ; \bflbda)$ is concave function of $\bflbda$. 
\end{enumerate}
\label{prop2}
\end{theorem}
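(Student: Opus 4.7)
My plan is to prove the two parts separately, each via a short and fairly standard duality argument, relying on the Lagrangian formulation (\ref{eq:L_lambda}) rather than the recursive form (\ref{eq:Leta_recursion}). The recursive form is useful computationally but hides the simple structure that makes both statements transparent. A preliminary observation is that the dynamic programming recursion (\ref{eq:Leta_recursion}) is exactly the Bellman equation associated with the Lagrangian problem (\ref{eq:L_lambda}) over the enlarged admissible set (policies not required to satisfy the linking constraint (\ref{eq:link})), so
\begin{equation*}
L_1(\xinit, v_1; \bflbda) = \underset{\pi}{\min}\; \E\!\left\{\sumT\left[\gt + \lambdatvt^\top\!\Big(\sumn z_t^i - \dt\Big)\right]\right\},
\end{equation*}
where the minimization is over all policies $\pi$ that select $\at \in \setAt$ at each stage (without the coupling equality).

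For part 1, I would argue weak duality. Restrict the minimization on the right-hand side to policies satisfying the linking constraint $\sumn z_t^i = \dt$ in every stage and every sample path, i.e., $\at \in \setAtc$. For any such policy, the Lagrangian penalty vanishes almost surely, so the Lagrangian objective coincides with the original objective $\E[\sumT \gt]$. Restricting the feasible set can only increase the minimum, so
\begin{equation*}
L_1(\xinit, v_1; \bflbda) \;\leq\; \underset{\pi:\at \in \setAtc}{\min}\; \E\!\left[\sumT \gt\right] \;=\; J_1(\xinit),
\end{equation*}
where the last equality follows from (\ref{eq:DP}) unrolled from stage $1$. This holds for every $\bflbda$.

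For part 2, I would exploit the fact that, for each fixed admissible policy $\pi$, the mapping
\begin{equation*}
\bflbda \;\longmapsto\; \E\!\left\{\sumT\left[\gt + \lambdatvt^\top\!\Big(\sumn z_t^i - \dt\Big)\right]\right\}
\end{equation*}
is \emph{affine} in $\bflbda$, because $\gt$ does not involve $\bflbda$ and the penalty term is linear in the values $\lambda_t(\cdot)$. Hence $L_1(\xinit, v_1; \bflbda)$ is the pointwise infimum of a family of affine functions of $\bflbda$ indexed by $\pi$, and such an infimum is concave.

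There is no real obstacle here; the only subtlety worth flagging is that one must justify the exchange between the recursive definition (\ref{eq:Leta_recursion}) and the flat form (\ref{eq:L_lambda}) so that both facts can be argued at the level of the flat Lagrangian. That follows by a standard induction very similar in spirit to the one used in the proof of Theorem \ref{thm:Lg}, and would be the only place where any care is needed.
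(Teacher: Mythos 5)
Your proof is correct, and part 1 is essentially the paper's own argument: the paper also observes that any policy feasible for the original problem makes the penalty term vanish, so the relaxed minimum is bounded above by the cost of the optimal feasible policy, i.e.\ by $J_1(\xinit)$. Where you genuinely diverge is part 2. The paper proves concavity by backward induction on the Lagrangian recursion (\ref{eq:Leta_recursion}): $L_{T+1}$ is trivially concave in $\bflbda$, the conditional expectation of a term linear in $\lambda_t(\vt)$ plus a concave $L_{t+1}$ is concave, and a pointwise minimum of concave functions is concave, so every $\Lte$ is concave. You instead work with the flat form (\ref{eq:L_lambda}) and note that for each fixed policy $\pi$ the objective is affine in the (finitely many) values $\lambda_t(\cdot)$, since with $\pi$ fixed the joint law of $(\vt,\zt,\dt)$ does not depend on $\bflbda$; hence $L_1$ is an infimum of affine functions and therefore concave. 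Your route is shorter and is the standard dual-function argument, but it hinges on the identification of the recursive quantity $L_1(\xinit,v_1;\bflbda)$ with the policy-optimization form, which you correctly flag as the one step needing justification; note the paper itself invokes exactly this identification ``by definition'' in its proof of part 1, so you are not assuming more than it does. What the paper's induction buys in exchange is slightly more information with no appeal to the flat form: it establishes concavity in $\bflbda$ of $\Lte$ at every stage $t$ and every state, not only at the root, which is the form in which the value functions are actually used downstream. Either argument is acceptable; if you keep yours, state explicitly (or prove by the same induction you mention) the equivalence of (\ref{eq:L_lambda}) and (\ref{eq:Leta_recursion}) at $t=1$, since both of your parts lean on it.
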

\begin{proof}
These results follow from standard Lagrangian theory \cite{Ber2016, BoV2004}. For any feasible policy, we have $\sumn z_t^i = \dt$ for all $\dt$, $t = 1, \ldots, T$. Thus, by definition
\begin{align*}
L_1(\xinit,v_1 ; \bflbda) &= \underset{\pi}{\min} \; \E \left[ \sumT \gt + \lambdatvt  \left(\sumn z_t^i - \dt\right) \right], \\
&\leq \E \left[ \sumT g_t(\xt, (\bm{\zeta}_t(\xt,\vt), \bm{\mu}_t(\xt,\vt))) \right],
\end{align*}
for a feasible policy $(\bm{\zeta}_t, \bm{\mu}_t)$ where $\bm{\zeta}_t$ is the policy associated with production levels and $\bm{\mu}_t$ is associated with the commitment decisions. Since the above is true for any feasible policy, it also holds for an optimal policy $(\bm{\zeta}_t^*(\xt,\vt), \bm{\mu}_t^*(\xt,\vt))$. We now have
\begin{equation*}
L_1(\xinit,v_1 ; \bflbda) \leq \E \left[ \sumT g_t(\xt, (\bm{\zeta}_t^*(\xt,\vt), \bm{\mu}_t^*(\xt,\vt))) \right] = J_1(\xinit).
\end{equation*}
For the second claim, we proceed by induction, and use the recursive definition of $L_1(\xinit,v_1 ; \bflbda)$ given in
(\ref{eq:Leta_recursion}). For the base case, we see that $L_{T+1}(\xterm, v_{T+1}; \bflbda )$ is clearly concave in $\bflbda$. Now, suppose $\Lto$ is concave in $\bflbda$. Then, the expected value term in (\ref{eq:Leta_recursion}) is a concave function of $\bflbda$. $\Lte$ is concave because it is a minimum of concave functions of $\bflbda$.
\end{proof}

We find the best lower bound and thus, maximize $L_1(\xinit,v_1 ; \bflbda)$ over $\bflbda$. We define
\begin{equation*}
\Lo = \underset{\bflbda}{\max} \; L_1(\xinit,v_1 ; \bflbda).
\end{equation*}
From Theorem \ref{prop2}, it follows that $\Lo \leq J_1(\xinit)$.

Because $L_1(\xinit,v_1 ; \bflbda)$ is concave in $\bflbda$, but not necessarily smooth, it can be maximized using supergradient-based methods.
In general, it is difficult to determine the supergradient exactly. However, we can obtain an unbiased  stochastic
estimator of a supergradient using sampling. The following theorem shows how to compute an unbiased estimator of a supergradient of $L_1(\xinit,v_1 ; \bflbda)$.

\begin{theorem}
Suppose $\pi = \{(\bm{\zeta}_1, \bm{\mu}_1), \ldots,
(\bm{\zeta}_{T}, \bm{\mu}_{T})\}$ is the optimal policy for the 
the Lagrangian relaxation problem $L_1(\xinit,v_1 ; \bflbda)$, where $\bm{\zeta}_t$ is associated with generator
production levels and $\bm{\mu}_t$ is associated with commitment decisions. Here, each subproblem $i$ has its policy
$\pi^i = \{(\zeta^i_1, \mu^i_1), \ldots, (\zeta^i_{T},
\mu^i_{T})\}$. An unbiased estimator of a supergradient of $\mathcal{L}$ at $\bflbda$ is
\begin{equation*}
\Bigl[\sumn \zeta^i_t(\xit,\vt) - \dt\Bigr]_{t=1}^T .
\end{equation*}
\label{thm:subgradient}
\end{theorem}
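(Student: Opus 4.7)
The plan is to invoke the standard Lagrangian-duality characterization: if $\pi$ attains the minimum that defines $\Leta$ at a particular $\bflbda$, then the expected constraint violation under $\pi$ is a supergradient of the concave dual function $\Leta$ at that point, so a single simulated trajectory gives an unbiased sample of that supergradient. Concavity of $\Leta$ has already been proved in Theorem~\ref{prop2}, so only identification of the supergradient and verification of unbiasedness remain.

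I would start by pinning down the correct dual space. Because each $\mathcal{V}_t$ is assumed finite, $\bflbda = (\lambda_t(v))_{1\le t \le T,\, v \in \mathcal{V}_t}$ lives in a finite-dimensional Euclidean space, and a supergradient $g = (g_t(v))$ is characterized by $\mathcal{L}(\bflbda') \le \Leta + \sum_t \sum_{v \in \mathcal{V}_t} g_t(v)\bigl(\lambda'_t(v) - \lambda_t(v)\bigr)$ for every $\bflbda'$.

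The main step is an asymmetric use of $\pi$: at $\bflbda$, the policy $\pi$ is by assumption optimal for the inner minimization in (\ref{eq:L_lambda}), so $\Leta = \E\bigl[\sumT \gt + \lambda_t(\vt)(\sumn \zeta^i_t(\xit,\vt) - \dt)\bigr]$; at any other $\bflbda'$, the same $\pi$ is still admissible, so it produces an upper bound on the value, yielding $\mathcal{L}(\bflbda') \le \E\bigl[\sumT \gt + \lambda'_t(\vt)(\sumn \zeta^i_t(\xit,\vt) - \dt)\bigr]$. Subtracting, the stage-cost contributions cancel and give $\mathcal{L}(\bflbda') - \Leta \le \E\bigl[\sumT (\lambda'_t(\vt) - \lambda_t(\vt))(\sumn \zeta^i_t(\xit,\vt) - \dt)\bigr]$. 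Conditioning on $\vt$ rewrites the right-hand side as $\sum_t \sum_v \Prob(\vt = v)\, \E\bigl[\sumn \zeta^i_t(\xit,v) - \dt \mid \vt = v\bigr]\,(\lambda'_t(v) - \lambda_t(v))$, which identifies $g_t(v) = \Prob(\vt = v)\, \E[\sumn \zeta^i_t(\xit,v) - \dt \mid \vt = v]$ as a valid supergradient.

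Finally, to establish unbiasedness, I would interpret the $T$-vector produced by one simulated trajectory of $\pi$ as the random object placing mass $\sumn \zeta^i_t(\xit,\vt) - \dt$ at the realized coordinate $(t,\vt)$ and zero elsewhere; its expectation is exactly $g_t(v)$ coordinate by coordinate, establishing the claim. The main obstacle I anticipate is mainly presentational: the theorem statement writes the estimator as a $T$-vector while the true supergradient is indexed by $(t,v)$ pairs, so the proof must spell out how the trajectory implicitly selects the realized $\vt$ as the active coordinate and how marginal averaging over sample paths supplies the correct weight $\Prob(\vt=v)$ on each $(t,v)$.
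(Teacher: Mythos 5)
Your proposal is correct and follows essentially the same argument as the paper: use the fact that $\pi$ is optimal for $\bflbda$ but only feasible (hence suboptimal) for any other $\hat{\bflbda}$, plug it into the relaxed objective at $\hat{\bflbda}$, and cancel the stage costs to obtain the supergradient inequality, with unbiasedness following by taking expectations over simulated trajectories. Your additional step of indexing the dual variable by the pairs $(t,v)$ and identifying the coordinate weights $\Prob(\vt = v)\,\E\bigl[\sumn \zeta^i_t(\xit,v) - \dt \mid \vt = v\bigr]$ is a more careful bookkeeping of the same computation (the paper writes the multipliers as $\lambda_t$ and the supergradient as a $T$-vector of expectations, leaving the $v$-dependence implicit), and it matches how the estimator $g_t(\vt)$ is actually accumulated in the algorithm.
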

\begin{proof}
For any $\hat{\bflbda}$, we have
\begin{align*}
L_1(\xinit,v_1 ; \hat{\bflbda}) &\leq \E \sumT \Bigg[ g_t(\xt, (\bm{\zeta}_t(\xt,\vt), \bm{\mu}_t(\xt,\vt))) +
\hat{\lambda}_t^\top \left(\sumn \zeta^i_t(\xit,\vt) - \dt\right) \Bigg] \\
&= \E \sumT \Bigg[ g_t(\xt, (\bm{\zeta}_t(\xt,\vt), \bm{\mu}_t(\xt,\vt))) + \lambda_t^\top \left(\sumn \zeta^i_t(\xit,\vt) - \dt \right) \\
& \quad\quad\quad + (\hat{\lambda}_t - \lambda_t)^\top \E \left(\sumn \zeta^i_t(\xit,\vt) - \dt\right) \Bigg] \\
&= L_1(\xinit,v_1 ; \bflbda)+ \sumT (\hat{\lambda}_t - \lambda_t)^\top \E \left(\sumn \zeta^i_t(\xit,\vt) - \dt\right),
\end{align*}
where the first inequality follows because $\pi$ is a feasible, but not necessarily optimal, policy for
$\hat{\bflbda}$. It follows that $\bigl[\E (\sumn \zeta^i_t(\xit,\vt) - \dt)\bigr]_{t=1}^T$ is a supergradient at $\bflbda$.
\end{proof}

The importance of Theorem \ref{thm:Lg} is that a {\it stochastic} supergradient method \cite{BoM2014, Ber2015, Ber2016} can then be applied to maximize
$L_1(\xinit,v_1 ; \bflbda)$. For a fixed $\bflbda$, we can solve the Lagrangian recursion via a decoupled approach given in Theorem \ref{thm:Lg}. 
Then, for any simulated sample path of the random variables $D_t$, for $t=1, \ldots, T$, the vector of demand violations $[\sumn
\zeta^i_t(\xit,\vt) - \dt]_{t=1}^T$, is an unbiased estimate of a supergradient of $L_1$ at $\bflbda$. 
To get a better (reduced variance) estimate of the supergradient, we can use batch gradient averages, i.e., simulate
many sample paths and average the demand violations to obtain a supergradient estimate. 

\subsection{Implementation of the DADP Approach}

%This  discretize the relaxed value function $\liute = L_t^i((\yit, \qit, D_t), \vt ; \bflbda)$ in	 the state $\xit$ and $\vt$ and model it as a piecewise linear value function in $\qit$. 

We solve the decoupled MDP subproblems using the recursion equation given in (\ref{eq:Lite}) using standard dynamic
programming. The only detail we need to deal with is that, as stated, the state space is not finite, so we cannot
directly enumerate all states when calculating the value function. To address this, for generator $i$, and for each
 possible value of $\yit$ and $D_{t} \in \mathcal{D}_{t}$, we discretize $q_{t}^i$ at the same points as for the cost
 function $\git$, i.e.,  $b_k^i$, for $k = 1, \ldots, K_i$, for every $t$. We denote the discretized version of the set $\setXt$ by $\tilde{\mathcal{X}}_t$. 
 
 %For each generator $i$ and time period $t$, we denote the points $(b_k^i, L^i_{t, l, d, v, k})$, for $k = 1, \ldots, K_i$, to be associated with the function $L_t$ evaluated at $\yit = l$, $D_t = d$, $\vt = v$, and $\xit = (l, b_k^i, d)$. 
 
 %We can now write the recursion given in (\ref{eq:Lite}) as
 %\begin{equation}
 %\label{eq:dp_imp}
 %L^i_{t, l, d, v, k} = \underset{\ait \in \setAit}{\min} g_t^i(x, \ait) + \lambda_t(v)  z_t^i  + \sum_{r \in \mathcal{D}_{t+1}} P_t(r | d)  L_{t+1}^i(f_t^i(x, \ait), \tilde{f}_t(v, r); \bflbda),
 %\end{equation}
% for $t = 1, \ldots, T$.
  
 Because both the piecewise-linear cost functions have the same set of break points in each time period, these break
 points are inherited by the value function, and hence the minimization in (\ref{eq:Lite}) always has a
 solution at one of those break points or at the bounds. To avoid having a solution at the bounds, we relax the bound
 constraints to allow one discretization point above and below the ramping bounds. Thus, although we restrict the
 production levels (and state variables) to lie at one of the break points, the value we obtain from the algorithm will
 still be a lower bound on the optimal value. Suppose $\tilde{\mathcal{A}}_t^i(\xit)$ is the discretized version of the set $\setAit$, where $\zit$ only takes on the values $b_k^i$, for $k = 1, \ldots, K_i$, or $b_0^i = 0$ if the generator is off, and the set is modified so that the ramping constraint (\ref{eq:ramp_up_down}) is relaxed to include one discretization point less than or equal to the lower bound and one point greater than or equal to the upper bound. Thus, we solve the discretized version of (\ref{eq:Lite}) by simply taking a pointwise minimum over the objective evaluated at their break points.
%\begin{equation*}
 %L^i_{t, l, d, v, k} = \underset{\ait \in \tilde{\mathcal{A}}_t^i(x)}{\min} g_t^i(x, \ait) + \lambda_t(v)  z_t^i  + \sum_{r \in \mathcal{D}_{t+1}} P_t(r | d)  L_{t+1}^i(f_t^i(x, \ait), \tilde{f}_t(v, r); \bflbda).
 %\begin{subarray}{c}
  %b = b_k^i, k = 1, \ldots, K_i \\
  %u \in \{0, 1\} \\
  %(b, u) \in \tilde{\mathcal{A}}_t^i(x)
  %\end{subarray}%
%\end{equation*}
The overall DADP algorithm is given in pseudocode in the Algorithm \ref{alg} box below. The ``parfor'' loops indicate loops that can be parallelized. 

%We discretize the previous demand state $\qit$ into $M$ possible values uniformly between $\bmin$ and $\bmax$, for $t = 1, \ldots, T$ and $i = 1, \ldots, n$. For the state $\vt$ at time $t$, we discretize into $O_t$ possible values. Let $L^i_{t, l, m, r, o}$ represent the relaxed value function $\Lte$ evaluated at the $l$th possible value of $\yit$, the $m$th possible value of $\qit$, the $r$th possible value of the demand $D_t$, and the $o$th possible value of $\vt$ for $i = 1, \ldots, n$. We also discretize the cost functions $\git$ at the same $M$ uniform values between $\bmin$ and $\bmax$. Thus, at each time period $t$ and generator $i$, due to the piecewise linear objective, the production level decision $\zit$ always lies on a one of these discretized break points for the unconstrained version of the problem (\ref{eq:Lite}). To ensure a true lower bound is produced in cases where the solution lies at the ramp up / down bounds (that could be between discretized points), we include one discretization point above and below the ramping bounds. 

\begin{algorithm}
\caption{DADP Algorithm}\label{alg}

%\algorithmicrequire{$f_i^j(x), i = 1, \ldots, m_j, \; j = 1, \ldots, n$.}

%\algorithmicensure{$L, (\bm{\zeta}_{t}(\xt,\vt), \bm{\mu}_{t}(\xt,\vt))$, for $t = 1, \ldots, T$}
\begin{algorithmic}
\State \textbf{Parameters:} $\rho, \eta, \text{maxIters}, \text{batchSize}$
\State Initialize $\lambda^1_t(v_1)$, for $t = 1, \ldots, T$
\For{($k=1;~k\gets k+1;~k\leq \text{maxIters}$)}
\ParFor{($i=1;~i\gets i+1;~i\leq n$)}
\For{($t=T;~t\gets t-1;~t > 0$)}
\For{($(x, v) = ((l, b_k^i, d), v) \in (\tilde{\mathcal{X}}_t, \mathcal{V}_t)$)}

\State $L_t^i(x, v; \mathbf{\lambda}) = \underset{\ait \in \tilde{\mathcal{A}}_t^i(x)}{\min} \big[ g_t^i(x, \ait) + \lambda_t^k(v)  z_t^i  + \sum_{r \in \mathcal{D}_{t+1}} P_t(r | d)  L_{t+1}^i(f_t^i(x, \ait), \tilde{f}_t(v, r); \bflbda) \big]$ 
\State and let $(\zeta^i_t(x, v), \mu^i_t(x, v))$ be an optimal action. 
%\State $(\zeta^i_t(x, v), \mu^i_t(x, v)) = \underset{\ait \in \tilde{\mathcal{A}}_t^i(x)}{\arg\min\;} \big[ g_t^i(x, \ait) + \lambda_t^k(v)  z_t^i$ \\
%\hfill $+ \sum_{r \in \mathcal{D}_{t+1}} P_t(r | d) L_{t+1}^i(f_t^i(x, \ait), \tilde{f}_t(v, r); \bflbda) \big]$
\EndFor
\EndFor
\EndParFor
\State $g_t(\vt) = 0$
\ParFor{($j=1;~j\gets j+1;~j\leq \text{batchSize}$)}
\For{($t=1;~t\gets t+1;~t\leq T$)}
\State Simulate $D_t$ 
\State Update states $\xt = \mathbf{f}_t(\mathbf{x}_{t-1}, (\bm{\zeta}_{t-1}(\mathbf{x}_{t-1},v_{t-1}), \bm{\mu}_{t-1}(\mathbf{x}_{t-1},v_{t-1})))$, $\vt = \tilde{f}_t(v_{t-1}, D_t)$
\State $g_t(\vt) = g_t(\vt) + (\sumn \zeta^i_t(\xit,\vt) - \dt$) / $\text{batchSize}$
\EndFor
\EndParFor
\State $\lambda^k_t(v_1) \gets \lambda^{k-1}_t(v_1) + \rho \eta^k g_t(\vt)$, where $\rho$ and $0 < \eta < 1$ are step size parameters
\EndFor
%\State $\tilde{L} = 0$
%\ParFor{($m=1;~m\gets m+1;~m\leq M$)}
%\State Simulate $D_1, \ldots, D_T$
%\State Update states $\vt = \tilde{f}_t(v_{t-1}, D_t)$, for $t = 1, \ldots, T$
%\State $ \tilde{L} =\tilde{L} + [\sum_{t=1}^T \sum_{r \in \mathcal{D}_{t}} P_t(r | D_{t-1})  \lambda_t^k(\vt) r] / M$
%\EndParFor
%\State Evaluate at solved value functions at initialized states $t = 1, l = y_1^1 = (0, \lil), d = D_1 = 0, v = v_1 = 0, k = 0$, and compute lower bound $L = \sum_{i = 1}^n L_{t, l, d, v, k} - \tilde{L}$ \\
\State $\Lo =  \sumn L_1^i(x_1^i,v_1 ; \bflbda) - \sum_{t=1}^T \E [ \lambda_t(v_t)D_t \; \vert \;
v_1 ] $, where initial states $x_1^i = ((0, \lil), 0, 0)$ and $v_1 = 0$. \\
\Return $\Lo, \; (\bm{\zeta}_{t}(\xt,\vt), \bm{\mu}_{t}(\xt,\vt))$, for $t = 1, \ldots, T$
\end{algorithmic}
\end{algorithm}

\subsection{One-step Lookahead Policy and an Upper Bound}
\label{subsection:upper}

Solving the Lagrangian relaxation (\ref{eq:L_lambda}) gives a lower bound to the optimal value. However, the policy obtained from solving this problem via a DADP approach is not guaranteed to be feasible. One way to obtain a feasible policy is to approximate the future value function with the relaxed value functions, i.e., with $\Lte$, for $t = 1, \ldots, T$. We use a one-step lookahead policy \cite{Ber2005}. 
We first use the DADP approach to optimize for $\bflbda$ and solve $n$ independent MDP and their associated relaxed value functions $\Lte$, for $t = 1, \ldots, T$.
Then, we simulate a sample path $D_1, \ldots, D_{T}$. For time $t=1, \ldots, T$, we solve the following:
\begin{equation}
\label{eq:lookahead}
\hat{J}_t(\xt) = \underset{\at \in \setAtc}{\min} \bigg\{  \E \left[ \gt +  \Lto \vert \xt, \at \right] \bigg\},
\end{equation}
for the state $\xt$ which determines the generation decisions $\zit$ for the current period and the commitment
decisions $\uit$ that will determine the available generators in the next period. Problem \eqref{eq:lookahead} can be
formulated as a deterministic mixed-integer program with decision variables $(\zit, \uit)$ for $i=1,\ldots,n$ and with
constraints (\ref{eq:min_max})-(\ref{eq:link}). Note that in this policy we enforce the correct lower and upper bounds
implied by ramping, and hence the policy is feasible. The piecewise linear cost function and relaxed value functions $\Lto$ are modeled with turn on variables and non-negative variables, similar to what is done for the perfect information MIP in section \ref{subsec:per_info}. Since this lookahead  policy is a feasible policy, we can obtain a stochastic upper bound by generating a large number of samples and averaging the resulting overall costs from applying the above policy.

The Lagrangian dual problem may be solved offline once to generate lower bounds, and the obtained value function
approximation may be used for the one-step lookahead policy through the whole time horizon. However, in cases where the
demand model does not exactly follow the assumed distribution, it may be advantageous to re-optimize the Lagrangian dual
every so often with updated demand information. 

\subsection{Discussion}

The DADP approach is an extension of the approach by Adelman and Mersereau \cite{AdM2008}, which describes using a state independent multiplier $\lambda_t$ for Lagrangian decomposition. Their paper shows that using a state dependent multiplier, e.g., $\lambda_t (\xt)$, does not result in decomposition. However, in the DADP approach, dependence on only the exogenous demand history or some function of it, e.g., $v_t$, results in the desirable decomposition property.

Our model formulation for the stochastic unit commitment problem closely follows the one in Takriti et al
\cite{TKW2000}. Their approach also uses Lagrangian decomposition, but the multipliers depend on the scenario. This
dependence results in an exponential increase in the number of scenarios with the decision stages, and the solution
approach in \cite{TKW2000} quickly becomes intractable, or requires an overly coarse representation of the stochastic
process. For example, if we assume there are 10 demand scenarios for every hour of the week (168 periods), we would have a total of $10^{168}$ multipliers, each associated with a scenario. Thus, this limits the approach to a relatively small number of decision stages.

The scenario-based approach is also special case of the DADP approach when the state $\vto$ consists of the full demand history:
\begin{equation*}
v_t = [D_1, \ldots, D_{t-1}, D_t].
\end{equation*}
As mentioned earlier, this representation results in a huge state space and is therefore limited to a small number of
stages. The key in the DADP approach is the selection of a good ``summary function'' $f_t(v_{t-1}, D_{t})$ for
summarizing the demand process up to stage $t$.  This requires finding a tradeoff between letting $\vt$ represent the
full demand history and ignoring the history completely (which is the state independent multiplier case in Adelman and Mersereau \cite{AdM2008}).

\section{Numerical Illustration}
\label{sec:num}

All implementations and problem instances can be obtained at \href{https://github.com/jramak/dual-adp-suc}{https://github.com/jramak/dual-adp-suc}. 
\subsection{Problem Data}

We use generator data from the FERC eLibrary Docket Number AD10-12, ACCNNUM 20120222-4012. This included min up/down
times, ramp up/down amounts, no load costs, turn on costs, and up to 10 pairs of price-quantity bids. Of the 1011
generators, we randomly selected generators for our 15, 30, and 50 generator test cases. We obtained 2013 hourly demand
data from the PJM Interconnection ISO, which is the regional transmission organization for the eastern electricity
market. In order to model realistic demand fluctuations, we averaged out demand for each of the 168 hours of the week
and normalized it by the maximum demand (see Figure \ref{fig:demand}). Note that we have $T=169$ because in the first period $D_1 = 0$ and $z_1^i = 0$ for all $i$, and only the on/off decisions $\mathbf{u}_1$ for the next (second) stage are determined. To create a reasonable problem for each test
case, we scaled this normalized demand by a percentage of the maximum combined generation level of all generators,
\emph{TotCap}, in the test set. We denote $\mu$ to be the percentage of the maximum generation level, e.g., the scaling
factor was $\mu*$\emph{TotCap}. We assume demand is independent between time periods and sample 10 possible demand
scenarios for each period. To generate demand scenarios for each time $t$, we evaluated 10 points using the
Legendre-Gauss Quadrature in the interval $[d_{scaled} - 4 \sigma d_{scaled}, d_{scaled} + 4 \sigma d_{scaled}]$, where
$d_{scaled}$ is the scaled mean demand and $\sigma$ represents the percentage of the variation in the mean demand. With
the choices  $\mu = 0.4, 0.6, 0.8$ and $\sigma = 0.15, 0.20, 0.25$, we had a total of 9 instances for each test case.
For modeling the generation cost, we evaluated the price-quantity bids over a uniform grid between the minimum and
maximum generation levels (i.e., $\bmin$ and $\bmax$) using 50 points. 

\begin{figure}
\begin{center}
\includegraphics[width=4.0in]{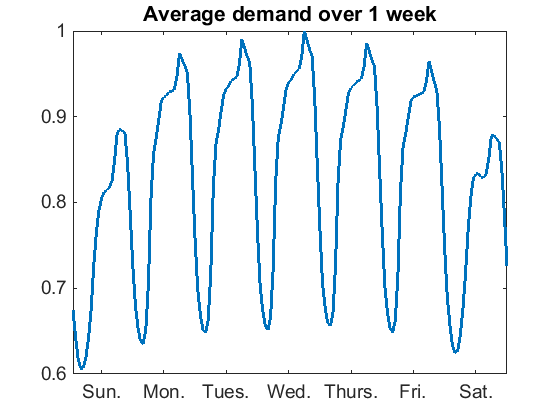}
\end{center}
\caption{Average hourly demand from the 2013 PJM Interconnection ISO normalized by the maximum average demand.}
\label{fig:demand}
\end{figure}

\subsection{DADP-based Bounds}

For the DADP approach, we use $v_t = D_t$, for $t =  1, \ldots, T$, which with 10 possible demand values per period increases the state space by 10 compared to the state independent case. Due to having $v_t = D_t$ and the stagewise independence assumption for our numerical example, we calculate for a fixed $\bflbda$ the expected value term exactly in the Lagrangian function (\ref{eq:Lg_decompose}):
\begin{equation*}
 \sum_{t=1}^T \E [ \lambda_t(v_t)D_t \; \vert \; v_1 ] = \sum_{t=1}^T \E [ \lambda_t(D_t)D_t ] = \sum_{t=1}^T \sum_{\delta \in \mathcal{D}_t} [ P_t(\delta) \; \lambda_t(\delta) \; \delta ]
\end{equation*}
Because of the exact calculation of the expected value term, the lower bounds produced by the DADP approach are deterministic.

For the stochastic supergradient method, we used a batch of 1000 sample paths to estimate the supergradient $g_t$ at each
step. For the supergradient estimation, we used a serial implementation, but this could be easily parallelized. The $k$th iteration is defined as
\begin{equation*}
\lambda_{t}^{k}(D_{t}) = \lambda_t^{k-1}(D_t) + \rho \eta^k g_t,
\end{equation*}
for $t = 1, \ldots, T$, where $\rho$ and $0 < \eta < 1$ are step size parameters. For large enough $\rho$ and $\eta$, the supergradient method is
guaranteed to converge although larger values can result in slower convergence. We used $\rho = 50 /(
\mu*\text{\emph{TotCap}})$, $\eta = 0.99$, and set 250 as the maximum number of iterations. The parameter choices we chose resulted in apparent convergence and provided good enough solution quality and times, and we did not optimize the choice of the parameters further. Similar to the approach in \cite{TKW2000}, we initialize the multipliers $\lambda_t(D_t)$ by approximating the cost in period $t$ for generator $i$ by a linear function and solving the approximate problem. The slope of the linearized cost function at time $t$ for generator $i$ is 
\begin{equation*}
(\hi + \ci + \Ffunc (\bmax)) / \bmax.
\end{equation*}
The above slopes are sorted in increasing order, and the demand $D_t$ is fulfilled by generators in this order. We initialize $\lambda_t(D_t)$ to be the slope of the last unit used. 

We implemented the state independent Lagrangian approach from \cite{AdM2008} and the DADP approach in MATLAB 64-bit R2014b. For each instance, we used the HTCondor framework to schedule a job on a machine with at least 4 CPUs, 4 GB RAM, and 12 GB disk space. Both the Lagrangian approach and the DADP approach ran on the same machine one after the other for fair time comparisons. Within each instance, for the generator subproblems, we used MATLAB's parpool with 4 workers. We modeled the one-step lookahead DADP MIP described in Subsection \ref{subsection:upper} in GAMS and used the solver CPLEX 12.6. For each of the 500 sample paths, we solved a sequence of MIPs, one for each time period, by using system calls to GAMS from MATLAB.

\subsection{Perfect Information Bound}
\label{subsec:per_info}

For comparison to the DADP approach, we also implemented the perfect information approach that provides a lower bound.
In this approach, we assume perfect knowledge of a sample path (i.e., demand realizations). For a given sample path, the
stochastic unit commitment problem becomes a MIP. We simulated 100 sample paths and averaged the resulting costs to
obtain a stochastic lower bound, and a 95\% confidence interval around this stochastic bound. Note that this
approach only provides a stochastic lower bound and does not generate an implementable policy (and therefore, an
associated upper bound).

We provide the perfect information MIP formulation below. Here again $\zit$ refers to the generation level in time
period $t$, $\uit$ refers to the commitment decision for time period $t+1$, and $\dt$ refers to the demand observed
immediately before determining the generation level $\zit$. To model start up and shut down costs, we introduce
additional turn on variables $w_t^i$, for $i = 1, \ldots, n$ and $t = 1, \ldots, T$, that have shown to result in
stronger relaxations (see \cite{RaT2005}). We model piecewise linear functions with the locally ideal MIP formulation
suggested in \cite{SLL2013}.

\begin{equation*}
\begin{aligned}
& \underset{u, z, \gamma, e, w}{\text{minimize}}
& & \sumn \sumT \left[ e_t^i + \ci \uit + \hi w_t^i \right]  \\
& \text{subject to}
& & \sumn z_t^i = \dt, \; t = 1, \ldots, T, & & \text{(demand satisfaction)} \\
& & & w_0^i = u_0^i, \; i = 1, \ldots, n, \\
& & & w_t^i \geq \uit - u_{t-1}^i, \; i = 1, \ldots, n, \; t = 1, \ldots, T, & & \text{(turn on variables)} \\
& & & \sum_{r=(t' - \liu + 1)^+}^{t'} w_r^i \leq \uit, \; i = 1, \ldots, n, \; t' = 0, \ldots, T - 1, & & \text{(turn on inequalities)} \\
& & & \sum_{r=(t' - \lil + 1)^+}^{t'} w_r^i \leq 1- u_{t-\lil}^i, \; i = 1, \ldots, n, \; t' = \lil, \ldots, T - 1, & & \text{(turn off inequalities)} \\
& & & z_{t-1}^i - \rd - (1-u^i_{t-1}) b_{\min}^i \leq z_t^i, \; i = 1, \ldots, n, \; t = 1, \ldots, T, & & \text{(ramp down constraints)}\\
& & & z_t^i \leq z_{t-1}^i + \ru + w_{t-1}^i b_{\min}^i, \; i = 1, \ldots, n, \; t = 1, \ldots, T, & & \text{(ramp up constraints)}\\
& & & \sum_{k = 0}^{K_i} \gamma_{t, k}^i = u_{t-1}^i, \; i = 1, \ldots, n, \; t = 1, \ldots, T, \\
& & & z_t^i = \sum_{k = 0}^{K_i} b_k^i \gamma_{t, k}^i, \; i = 1, \ldots, n, \; t = 1, \ldots, T, \\
& & & e_t^i = \sum_{k = 0}^{K_i} c_k^i \gamma_{t, k}^i, \; i = 1, \ldots, n, \; t = 1, \ldots, T, & & \text{(PWL cost)}\\
& & & \uit \in \{ 0, 1\}, \; i = 1, \ldots, n, \; t = 0, \ldots, T,\\
& & & w_t^i \in \{ 0, 1\}, \; i = 1, \ldots, n, \; t = 0, \ldots, T,\\
& & & \gamma_{t, k}^i \geq 0, \; k = 0, \ldots, K_i, \; i = 1, \ldots, n, \; t = 1, \ldots, T,\\
\end{aligned}
\end{equation*}
where we define $(x)^+ = \max (0, x)$.

For fair comparison, the perfect information MIP ran after DADP lower bound on the same machine scheduled by HTCondor. The MIP was modeled in GAMS, solved using CPLEX 12.6, and was called through system calls in MATLAB. The MIPs for each sample path ran in parallel through MATLAB's 4 parfor workers. We limited the MIP solver to use a single CPU.

\subsection{Results}

\begin{table}[h]
\caption{Comparison of lower bounds from the state-independent Lagrangian approach, the DADP approach, and the perfect information bound. The upper bounds are generated from the one-step lookahead policy using the value function generated from the DADP approach.}
\label{table:bounds}
\centering
\begin{tabular}{cccrrrrrr}
\toprule
\multirow{2}{*}{$\mu$} & \multirow{2}{*}{$\sigma$} & \multirow{2}{*}{\# Gen}  & \multicolumn{4}{c}{Lower Bounds [\$ millions]} & \multicolumn{2}{c}{Upper Bound [\$ millions]} \\
\cmidrule(r){4-7} \cmidrule(r){8-9}
& & & \multicolumn{1}{c}{$\lambdat$} & \multicolumn{1}{c}{$\lambdatvt$} & \multicolumn{1}{c}{mean$_{PInfo}$} & \multicolumn{1}{c}{HW$_{PInfo}$} & \multicolumn{1}{c}{mean} & \multicolumn{1}{c}{HW}\\
\midrule
0.4 & 0.15 & 15 & 9.17 & 9.65 & 9.48 & 0.03 & 9.94 & 0.02 \\

0.4 & 0.20 & 15 & 9.17 & 10.17 & 9.73 & 0.04 & 10.53 & 0.02 \\

0.4 & 0.25 & 15 & 9.17 & 10.85 & 10.11 & 0.04 & 11.33 & 0.03 \\

0.6 & 0.15 & 15 & 14.96 & 17.75 & 16.89 & 0.08 & 18.08 & 0.05 \\

0.6 & 0.20 & 15 & 14.97 & 19.84 & 18.29 & 0.13 & 20.19 & 0.07 \\

0.6 & 0.25 & 15 & 14.96 & 22.52 & 19.79 & 0.19 & 22.70 & 0.10 \\

0.8 & 0.15 & 15 & 26.99 & 35.04 & 33.76 & 0.31 & 35.80 & 0.14 \\

0.8 & 0.20 & 15 & 26.98 & 39.93 & 37.98 & 0.41 & 41.52 & 0.19 \\

0.8 & 0.25 & 15 & 26.99 & 45.15 & 43.07 & 0.54 & 46.95 & 0.26 \\
\midrule
0.4 & 0.15 & 30 & 7.15 & 8.99 & 8.49 & 0.04 & 9.31 & 0.03 \\

0.4 & 0.20 & 30 & 7.15 & 10.07 & 9.17 & 0.05 & 10.73 & 0.05 \\

0.4 & 0.25 & 30 & 7.15 & 11.17 & 9.91 & 0.07 & 12.53 & 0.08 \\

0.6 & 0.15 & 30 & 11.54 & 15.09 & 13.85 & 0.06 & 15.90 & 0.06 \\

0.6 & 0.20 & 30 & 11.53 & 16.95 & 15.16 & 0.10 & 18.24 & 0.09 \\

0.6 & 0.25 & 30 & 11.54 & 18.90 & 16.63 & 0.16 & 20.76 & 0.12 \\

0.8 & 0.15 & 30 & 16.75 & 23.87 & 22.27 & 0.26 & 25.59 & 0.13 \\

0.8 & 0.20 & 30 & 16.75 & 28.87 & 26.52 & 0.33 & 31.64 & 0.23 \\

0.8 & 0.25 & 30 & 16.75 & 34.12 & 31.48 & 0.46 & 37.50 & 0.22 \\
\midrule
0.4 & 0.15 & 50 & 11.59 & 12.93 & 12.29 & 0.05 & 13.03 & 0.03 \\

0.4 & 0.20 & 50 & 11.59 & 13.93 & 12.85 & 0.06 & 13.96 & 0.05 \\

0.4 & 0.25 & 50 & 11.59 & 15.12 & 13.47 & 0.08 & 15.18 & 0.06 \\

0.6 & 0.15 & 50 & 20.40 & 23.65 & 22.17 & 0.12 & 24.50 & 0.08 \\

0.6 & 0.20 & 50 & 20.40 & 26.45 & 23.79 & 0.19 & 28.15 & 0.15 \\

0.6 & 0.25 & 50 & 20.40 & 30.05 & 26.21 & 0.26 & 31.03 & 0.14 \\

0.8 & 0.15 & 50 & 31.86 & 47.02 & 43.87 & 0.53 & 47.98 & 0.27 \\

0.8 & 0.20 & 50 & 31.85 & 56.42 & 51.97 & 0.82 & 57.81 & 0.43 \\

0.8 & 0.25 & 50 & 31.84 & 67.13 & 62.13 & 1.06 & 68.42 & 0.55 \\

\bottomrule
\end{tabular}
\end{table}

We report the bounds obtained from each of the approaches for the 27 test instances in Table \ref{table:bounds}. For the
lower bounds, $\lambdat$ refers to the state independent Lagrangian bound \cite{AdM2008} and $\lambdatvt$ refers to the
DADP approach. Since the perfect bound is stochastic, we report the average under the column mean$_{PInfo}$ and the
half-width from a 95\% confidence interval under the column HW$_{PInfo}$. It's defined as $\text{HW}_{PInfo} = 1.96 \;
\sigma_{PInfo} / \sqrt{N}$, where $\sigma_{PInfo}$ is the sample standard deviation and $N = 100$ is the number of
sample paths. For the upper bounds, we used the one step lookahead approach with the value function obtained from the
DADP approach. The half-width is again defined similarly except we used $N = 500$ sample paths. We see that the DADP approach provides improved lower bounds over the perfect information and state-independent approach. The upper bounds show that we are not too far from closing the optimality gap.

In Table \ref{table:times}, we report the solve times for the lower bounds. The results in each row were obtained on the
same machine, so comparing solve times comparisons between approaches within each row are meaningful. However, different instances may have been
run on different machines, so we should not compare solve times in different rows to each other. For most instances, the
state-independent approach is faster than the DADP and perfect information approaches. The DADP approach is slower than
the perfect information bound for the 15 generator case, but faster for the 30 generator instances. For the 50 generator
instances, it was surprising that in comparison to the DADP approach the perfect information approach had a
comparable speed for most instances and was even faster for a few of them. After looking into this further, we found that
the 30 generator instances had a different mix of generators than the 50 generator instances, which made them more
difficult to solve. In particular, a few of the generators were long term generators that had large minimum and maximum
generation levels and once turned on had to remain on for the remainder of the time horizon. Overall, the DADP approach
provides better bounds, and scales similarly and possibly better than the perfect information bounds. For the one-step lookahead upper bound, we solved a MIP for each time
period for each of the 500 sample paths. The average solve time for each MIP in the first 50 generator instance was
about 1.1 seconds, indicating that this may be practical for implementation.

\begin{table}[h]
\caption{Solve times for the state-independent Lagrangian approach, the DADP approach, and the perfect information bound reported in minutes.}
\label{table:times}
\centering
\begin{tabular}{cccrrr}
\toprule
\multirow{2}{*}{$\mu$} & \multirow{2}{*}{$\sigma$} & \multirow{2}{*}{\# Gen}  & \multicolumn{3}{c}{Solve Time [min]} \\
\cmidrule(r){4-6}
& & & \multicolumn{1}{c}{$\lambdat$} & \multicolumn{1}{c}{$\lambdatvt$} & \multicolumn{1}{c}{PInfo} \\
\midrule
0.4 & 0.15 & 15 & 3.2 & 40.8 & 3.7 \\

0.4 & 0.20 & 15 & 7.6 & 151.5 & 7.3 \\

0.4 & 0.25 & 15 & 7.6 & 150.9 & 12.8 \\

0.6 & 0.15 & 15 & 7.9 & 150.5 & 4.8 \\

0.6 & 0.20 & 15 & 3.2 & 40.4 & 2.4 \\

0.6 & 0.25 & 15 & 12.8 & 99.4 & 7.8 \\

0.8 & 0.15 & 15 & 5.3 & 88.4 & 3.9 \\

0.8 & 0.20 & 15 & 7.8 & 137.1 & 3.7 \\

0.8 & 0.25 & 15 & 8.5 & 136.4 & 3.8 \\
\midrule
0.4 & 0.15 & 30 & 20.3 & 137.9 & 986.2 \\

0.4 & 0.20 & 30 & 51.6 & 341.1 & 1264.1 \\

0.4 & 0.25 & 30 & 66.1 & 316.3 & 1120.5 \\

0.6 & 0.15 & 30 & 66.5 & 315.1 & 480.5 \\

0.6 & 0.20 & 30 & 67.1 & 316.2 & 1171.7 \\

0.6 & 0.25 & 30 & 20.0 & 158.4 & 1379.1 \\

0.8 & 0.15 & 30 & 20.2 & 160.3 & 151.7 \\

0.8 & 0.20 & 30 & 48.1 & 347.1 & 435.7 \\

0.8 & 0.25 & 30 & 47.9 & 347.9 & 515.9 \\
\midrule
0.4 & 0.15 & 50 & 34.8 & 271.7 & 225.9 \\

0.4 & 0.20 & 50 & 21.8 & 249.4 & 263.5 \\

0.4 & 0.25 & 50 & 21.6 & 248.9 & 281.6 \\

0.6 & 0.15 & 50 & 22.2 & 250.0 & 174.2 \\

0.6 & 0.20 & 50 & 18.9 & 256.5 & 101.1 \\

0.6 & 0.25 & 50 & 23.9 & 281.7 & 58.7 \\

0.8 & 0.15 & 50 & 10.3 & 134.6 & 8.8 \\

0.8 & 0.20 & 50 & 22.6 & 250.7 & 21.1 \\

0.8 & 0.25 & 50 & 22.3 & 253.7 & 20.9 \\
\bottomrule
\end{tabular}
\end{table}

\section{Conclusion}

In our numerical results, we included a single coupling demand constraint. If there are multiple loads, we could in principle have a coupling constraint for each one and relax each a different sets of multipliers. However, this would create further variables to optimize the Lagrangian function, and the obtained bounds may become weaker. Future work could address handling the case of multiple coupling constraints efficiently.

In this paper, we assumed that the demands were independent from one time period to another. A simple extension, such as having weather states that indicate the demand distribution, is possible. However, more complex modeling such as a two-level Markov model with hidden states \cite{NJD2017} would require further investigation.

\section{Acknowledgements}

Thanks to Dr. Yanchao Liu for providing the FERC generator data.   
This work is supported by the U.S. Department of Energy, Office of
Science, Office of Advanced Scientific Computing Research, Applied Mathematics program under contract number
DE-AC02-06CH11357.

%\printbibliography
\bibliographystyle{abbrv}
\bibliography{WeaklyCoupledPaper}
\end{document}